\pdfoutput=1
\RequirePackage{ifpdf}
\ifpdf 
\documentclass[pdftex]{sigma}
\else
\documentclass{sigma}
\fi

\usepackage{tikz-cd}
\usetikzlibrary{matrix}

\newcommand\reallywidehat[1]{%
	\savestack{\tmpbox}{\stretchto{%
			\scaleto{%
				\scalerel*[\widthof{\ensuremath{#1}}]{\kern-.6pt\bigwedge\kern-.6pt}%
				{\rule[-\textheight/2]{1ex}{\textheight}}%
			}{\textheight}%
		}{0.5ex}}%
	\stackon[1pt]{#1}{\tmpbox}%
}

\newcommand{\C}{{\mathbb C}}
\newcommand{\R}{{\mathbb R}}

\renewcommand{\H}{{\mathbb H}}
\newcommand{\6}{\partial}

\newcommand{\arrow}{{\:\longrightarrow\:}}

\newcommand{\Aut}{\operatorname{Aut}}

\renewcommand{\Re}{\operatorname{Re}}
\renewcommand{\Im}{\operatorname{Im}}

\makeatletter
\newcommand*{\mfaktor}[3][]{%
 { \mathpalette{\mfaktor@impl@}{{#1}{#2}{#3}} }%
}
\newcommand*{\mfaktor@impl@}[2]{\mfaktor@impl#1#2}
\newcommand*{\mfaktor@impl}[4]{%
 \settoheight{\faktor@zaehlerhoehe}{\ensuremath{#1#2{#3}}}%
 \settoheight{\faktor@nennerhoehe}{\ensuremath{#1#2{#4}}}%
 \raisebox{-0.5\faktor@zaehlerhoehe}{\ensuremath{#1#2{#3}}}%
 \mkern-4mu\diagdown\mkern-5mu%
 \raisebox{0.5\faktor@nennerhoehe}{\ensuremath{#1#2{#4}}}%
}
\makeatother

\numberwithin{equation}{section}
\newtheorem{Theorem}{Theorem}[section]
\newtheorem*{Theorem*}{Theorem}

\newtheorem{Proposition}[Theorem]{Proposition}
\newtheorem{Conjecture}[Theorem]{Conjecture}

{ \theoremstyle{definition}
\newtheorem{Definition}[Theorem]{Definition}

\newtheorem{Example}[Theorem]{Example}
\newtheorem{Remark}[Theorem]{Remark}
}

\begin{document}

\newcommand{\arXivNumber}{2502.19520}

\renewcommand{\PaperNumber}{069}

\FirstPageHeading

\ShortArticleName{Curves on Endo--Pajitnov Manifolds}
\ArticleName{Curves on Endo--Pajitnov Manifolds}

\Author{Cristian CIULIC\u{A}}
\AuthorNameForHeading{C.~Ciulic\u{a}}

\Address{Faculty of Mathematics and Computer Science, University of Bucharest,\\ 14 Academiei Str., 010014 Bucharest, Romania}
\Email{\href{mailto:cristiciulica@yahoo.com}{cristiciulica@yahoo.com}}

\ArticleDates{Received February 28, 2025, in final form August 06, 2025; Published online August 13, 2025}

\Abstract{Endo--Pajitnov manifolds are generalizations to higher dimensions of the Inoue surfaces $S^M$. We study the existence of complex submanifolds in Endo--Pajitnov manifolds. We identify a class of these manifolds that do contain compact complex submanifolds and establish an algebraic condition under which an Endo--Pajitnov manifold contains no compact complex curves.}

\Keywords{Inoue surface; Oeljeklaus--Toma manifold; Endo--Pajitnov manifold; foliation}
\Classification{53C55}

\section{Introduction}
	
Among the surfaces in Kodaira's class VII, the three types of Inoue surfaces~\cite{inoue}, play a prominent role. They are compact non-K\"ahler surfaces with no non-trivial meromorphic functions and without complex curves.

 The Inoue surfaces of type $S^M$ are solvmanifolds, quotients of $\H\times\C$, where $\H$ is the Poincar\'e half-plane, by a group constructed out of a matrix $M\in \textrm{SL}(3, \mathbb{Z})$ with one real (irrational) eigenvalue $\alpha>1$ and two complex conjugate ones, $\beta$, $\bar\beta$. Denoting with $(a_i)$, respectively $(b_i)$, a real eigenvector of $\alpha$, respectively an eigenvector of $\beta$, we can define $g_0(w,z)=(\alpha w, \beta z)$ and~${g_i(w,z)=(w+a_i, z+b_i)}$, $i=1,2,3$, and let $G_M$ be the group generated by $g_0$, $g_1$, $g_2$, $g_3$. Then the Inoue surface $S^M$ is $G_M\backslash\H\times\C$.

 In 2005, the surfaces $S^M$ were generalized to higher dimensions by Oeljeklaus and Toma~\cite{ot}. Each such manifold is covered by $\H^s\times \C^t$ and it is associated to a number field with $s$ real places and $t$ complex ones. The Oeljeklaus--Toma (OT) manifolds are non-K\"ahler and contain no compact complex curves~\cite{v_1}, and no compact complex submanifolds of dimension 2 except Inoue surfaces \cite{v_2}. Moreover, the OT manifolds which admit locally conformally K\"ahler metrics do not have non-trivial meromorphic functions and hence they do not admit compact complex submanifolds~\cite{ov}.

 In 2019, Endo and Pajitnov~\cite{pajitnov1} proposed another generalization of the Inoue surfaces $S^M$ to higher dimensions, again based on an integer matrix $M$ with special requirements on its eigenvalues, just like the original construction. They proved that these new manifolds are non-K\"ahler and, if $M$ is diagonalizable, then some of these manifolds are biholomorphic to OT manifolds. Further topological and metric properties of the Endo--Pajitnov manifolds were discussed in~\cite{cos}.

 In this note, we study the existence of complex submanifolds in Endo--Pajitnov manifolds. We~describe a class of Endo--Pajitnov manifolds which contain complex submanifolds, specifically complex tori (Theorem~\ref{ex_th}). On the other hand, we determine an algebraic condition which prohibits the existence of compact complex curves (Theorem~\ref{th_main}). We also obtain a result (Proposition~\ref{sup}) regarding the existence of surfaces in an Endo--Pajitnov manifold from the~class of those without complex curves.

\section{Endo--Pajitnov manifolds}
\label{EP_def_sec}

In this section, we recall the construction of the Endo--Pajitnov manifolds, as introduced in~\cite{pajitnov1}.

Let $n > 1$ and $M \in \textrm{SL}(2n+1, \mathbb{Z})$ such that the eigenvalues of $M$ are $\alpha, \beta_1, \dots, \beta_k$, $\overline{\beta}_1, \dots, \overline{\beta}_k$ with $\alpha>0$, $\alpha \neq 1$ and $\Im(\beta_j)>0$.

Denote by $V$ the eigenspace corresponding to $\alpha$ and set
\begin{equation*}
	\begin{split}
		&W(\beta_j)=\bigl\{x \in \mathbb{C}^{2n+1}\mid \exists \ N \in \mathbb{N} \text{ such that } (M-\beta_j I)^Nx=0\bigr\},\\
	 &W=\bigoplus \limits_{j=1}^{k} W(\beta_j),\qquad \overline{W}=\bigoplus\limits_{j=1}^{k} W\big(\overline{\beta}_j\big).
	\end{split}
\end{equation*}
We then have $\mathbb{C}^{2n+1}=V \oplus W \oplus \overline{W}$.
Let $a \in \R^{2n+1}$ be a non-zero eigenvector corresponding to~$\alpha$ and fix a~basis $\{b_1, \dots, b_n\}$ in~$W$,
\[
a=\big(a^1, a^2, \dots, a^{2n+1}\big)^{\mathsf{T}},\qquad \ b_i=\big( b_{i}^1, b_{i}^2, \dots, b_{i}^{2n+1}\big)^{\mathsf{T}}, \qquad 1 \leq i\leq n.
\]
For any $1 \le i \le 2n + 1$, we let $u_i = \big(a^i, b_1^i, \dots , b_n^i\big) \in \R \times \C^n \simeq \R^{2n+1}$.
Note that $\{u_1, \dots , u_{2n+1} \}$ are linearly independent over $\R$, since $\bigl\{a, b_1, \dots, b_n, \overline{b}_1, \dots, \overline{b}_n\bigr\}$ is a~basis of~$\mathbb{C}^{2n+1}$.

Let now $f_M\colon W \longrightarrow W$ be the restriction of the multiplication by $M$ on $W$ and $R$ the matrix of $f_M$ with respect to the basis $\{b_1, \dots , b_n\}$. Let $\mathbb{H}$ be the Poincar\'e upper half-plane, and consider the automorphisms $g_0, g_1, \dots, g_{2n+1}\colon \mathbb{H} \times \mathbb{C}^n \longrightarrow \mathbb{H} \times \mathbb{C}^n$,
\begin{gather*}
 g_0(w,z)=\big(\alpha w, R^{\mathsf{T}}z\big), \qquad g_i(w,z)=(w,z)+u_i,\qquad w \in \mathbb{H}, \quad z \in \mathbb{C}^n,\quad 1 \leq i \leq 2n+1.
\end{gather*}
These automorphisms are well defined because $\alpha > 0$ and the first component of $u_i$ is $a^i \in \R$.

Let $G_M$ be the subgroup of Aut$(\mathbb{H} \times \mathbb{C}^n)$ generated by $g_0, g_1, \dots, g_{2n+1}$.

\begin{Theorem}[{\cite{pajitnov1}}] The action of $G_M$ on $\mathbb{H} \times \mathbb{C}^n$ is free and properly discontinuous. Hence, the quotient $T_M:= (\mathbb{H} \times \mathbb{C}^n)/G_M$ is a compact complex manifold of complex dimension $n+1$, with $\pi_1(T_M) \simeq G_M$.
\end{Theorem}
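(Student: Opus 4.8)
The plan is to exploit the semidirect-product structure of $G_M$. First I would record the key algebraic identity that follows from $a$ being an $\alpha$-eigenvector of $M$ and from $R$ representing $f_M$ on $W$ in the basis $\{b_1,\dots,b_n\}$: a direct matrix computation gives
\[
g_0 g_i g_0^{-1} = \prod_{j=1}^{2n+1} g_j^{M_{ij}},
\]
i.e.\ conjugation by $g_0$ sends the translation by $u_i$ to the translation by $\sum_j M_{ij}u_j$. Because $M$ and $M^{-1}$ are both integral ($M\in\mathrm{SL}(2n+1,\mathbb{Z})$), the translation subgroup $\Lambda=\langle g_1,\dots,g_{2n+1}\rangle$ is normalized by $g_0$. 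Since the $u_i$ are $\mathbb{R}$-linearly independent, $\Lambda$ is a full-rank lattice in $\mathbb{R}\times\mathbb{C}^n\cong\mathbb{R}^{2n+1}$, and every element of $G_M$ then has the form $g_0^m\lambda$ with $m\in\mathbb{Z}$ and $\lambda\in\Lambda$, acting by $(w,z)\mapsto\big(\alpha^m(w+v_w),(R^{\mathsf{T}})^m(z+v_z)\big)$, where $(v_w,v_z)\in\mathbb{R}\times\mathbb{C}^n$ is the translation vector of $\lambda$.

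For freeness I would analyze fixed points of $g_0^m\lambda$. If $m=0$ the map is a nontrivial translation unless $\lambda=\mathrm{id}$, hence fixed-point free. If $m\neq0$, the first-coordinate equation $\alpha^m(w+v_w)=w$ forces $w=-\alpha^m v_w/(\alpha^m-1)$, which is real because $\alpha^m\neq1$ (as $\alpha>0$, $\alpha\neq1$) and $v_w\in\mathbb{R}$; this contradicts $w\in\mathbb{H}$. So only the identity fixes a point.

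For proper discontinuity the height function $(w,z)\mapsto\Im(w)$ does the work. Given a compact $K$, the quantity $\Im(w)$ stays in some $[c,C]\subset(0,\infty)$ on $K$; since $g_0^m\lambda$ multiplies $\Im(w)$ by $\alpha^m$, the condition $g_0^m\lambda\,K\cap K\neq\emptyset$ forces $\alpha^m\in[c/C,C/c]$, which leaves only finitely many $m$. For each fixed $m$, matching $\Re(w)$ and the $z$-coordinate between a point of $K$ and its image bounds $v_w$ (as $\alpha^m\neq0$) and $v_z$ (as $(R^{\mathsf{T}})^m$ is invertible), confining $(v_w,v_z)$ to a compact region that meets the lattice $\Lambda$ in finitely many points. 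Hence $\{g:gK\cap K\neq\emptyset\}$ is finite.

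Freeness and proper discontinuity of an action by biholomorphisms give a complex manifold quotient $T_M$ of dimension $n+1$; since $\mathbb{H}\times\mathbb{C}^n$ is contractible it is the universal cover, so $\pi_1(T_M)\cong G_M$. Compactness I would obtain from an explicit fundamental domain: applying a suitable power of $g_0$ pushes $\Im(w)$ into $[1,\alpha]$ (assuming $\alpha>1$; otherwise $[\alpha,1]$), and a subsequent lattice translation pushes $(\Re(w),z)$ into a fixed compact fundamental parallelepiped $\overline{P}$ of $\Lambda$, so $G_M\cdot K=\mathbb{H}\times\mathbb{C}^n$ for the compact set $K=\{\Im(w)\in[1,\alpha]\}\cap\{(\Re(w),z)\in\overline{P}\}$. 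I expect the only genuinely delicate point to be the bookkeeping in the conjugation identity—checking that it is $R^{\mathsf{T}}$, not $R$, that makes $g_0 g_i g_0^{-1}$ land in $\Lambda$ with the integer coefficients $M_{ij}$—since normality of $\Lambda$, the $g_0^m\lambda$ normal form, and therefore all three of freeness, proper discontinuity, and the fundamental-domain argument rest on it; the unifying mechanical idea throughout is that $g_0$ scales the single real coordinate $\Im(w)$ by a positive real $\alpha\neq1$ while the translations leave it unchanged.
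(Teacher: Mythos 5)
Your proof is correct, and it takes essentially the route of the original source: the paper itself states this theorem without proof (citing \cite{pajitnov1}), and your key conjugation identity $g_0 g_i g_0^{-1}=\prod_j g_j^{M_{ij}}$ is precisely \cite[Lemma~2.3]{pajitnov1}, which this paper invokes later; the remaining steps (normal form $g_0^m\lambda$, freeness and proper discontinuity via the height $\Im(w)$ and the full-rank lattice $\Lambda$, cocompactness via the fundamental slab $\Im(w)\in[1,\alpha]$ times a fundamental parallelepiped) follow the standard argument going back to Inoue. Your emphasis on $R^{\mathsf{T}}$ versus $R$ is exactly the right delicate point -- it is what makes the conjugation land in $\Lambda$ with integer exponents.
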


\begin{Definition}
The above quotient $T_M:= (\mathbb{H} \times \mathbb{C}^n)/G_M$ is called an \emph{Endo--Pajitnov manifold}.
\end{Definition}

\begin{Remark}
In the same paper, the authors prove that
\begin{itemize}\itemsep=0pt
\item If $M$ is diagonalizable, then some $T_M$ are biholomorphic to OT manifolds \cite[Proposition~5.3]{pajitnov1}.
\item If $M$ is not diagonalizable, then $T_M$ cannot be biholomorphic to any OT manifold \cite[Proposition~5.6]{pajitnov1}.
\end{itemize}
\end{Remark}

\section{A class of Endo--Pajitnov manifolds containing submanifolds}

We shall identify a class of Endo--Pajitnov manifolds that admit compact complex submanifolds. The idea is to define a holomorphic submersion from $T_M$ to another complex manifold; the fibers will be the complex submanifolds we look for. More precisely, these submanifolds will be complex~tori.
The existence of such a structure depends on a suitable choice of the initial matrix~$M$.

Let $n > 1$, $k \geq 1$ and $M \in \textrm{SL}(2n+1, \mathbb{Z})$ be a matrix which can be written in block form:
\[
M = \begin{pmatrix}
N & 0 \\
0 & P
\end{pmatrix}
\]
where \( P \) is a square matrix of dimension \( 2k \), \( N \in \textrm{SL}(2(n-k)+1, \mathbb{Z}) \), such that
\[
{\rm Spec}(N) = \bigl\{ \alpha, \beta_1, \dots, \beta_N, \overline{\beta}_1, \dots, \overline{\beta}_N \mid \alpha \in \mathbb{R},\, \alpha > 0,\, \alpha \neq 1,\, \Im(\beta_j) > 0 \bigr\}
\]
and
\[
{\rm Spec}(P) = \bigl\{ \beta_{N+1}, \dots, \beta_{P}, \overline{\beta}_{N+1}, \dots, \overline{\beta}_{P} \mid \Im(\beta_j) > 0 \bigr\}.
\]
It is clear that $M$ satisfies the conditions required in the construction of Endo--Pajitnov manifolds.
In this case, the matrix $R$ (see Section \ref{EP_def_sec}) is a block diagonal matrix.

Denote by $W^M(\beta_j)$, $W^N(\beta_j)$, and $W^P(\beta_j)$ the generalised eigenspaces of $\beta_j$ for $M$, $N$, and $P$ respectively. We pick a basis $\{ b_1, \dots, b_{2n+1} \}$ in $W$ which comes from bases in each $W^M(\beta_j)$ that in turn come from bases in $W^N(\beta_j)$ and $W^P(\beta_j)$, using the fact that $W^M(\beta_j) = W^N(\beta_j) \oplus W^P(\beta_j)$.

The diffeomorphisms $g_0, g_1, \dots, g_{2n+1}\colon \mathbb{H} \times \mathbb{C}^n \longrightarrow \mathbb{H} \times \mathbb{C}^n$ can be written explicitly as follows:
\begin{gather}
		g_0(w, (z_1, \dots, z_n)) = \bigl(\alpha w, R^{\mathsf{T}}z\bigr),\label{aut_ex}\\
		g_i(w, z) = (w, z) +
\begin{cases}
\big(a^i, b_1^i, \dots, b_{n-k}^i, 0, \dots, 0\big), & 1 \leq i \leq 2(n-k)+1, \\
\big(0, 0, \dots, 0, b_{n-k+1}^i, \dots, b_n^i\big), & i > 2(n-k)+1,
\end{cases}
\quad w \in \mathbb{H}, z \in \mathbb{C}^n.\nonumber
\end{gather}

Since $R$ is a block diagonal matrix and $g_i$ acts independently on each component for any $ 1 \leq i \leq 2n+1$, and owing to the special form of the automorphisms \eqref{aut_ex}, there exist ${\tilde{g_i}\colon \mathbb{H} \times \mathbb{C}^{n-k} \longrightarrow \mathbb{H} \times \mathbb{C}^{n-k}}$ such that the following diagram commutes:
\[
\begin{tikzcd}
	\mathbb{H} \times \mathbb{C}^{n} \arrow[r, "g_i"] \arrow[d, "{\rm pr}_{n-k}"'] & \mathbb{H} \times \mathbb{C}^{n} \arrow[d, "{\rm pr}_{n-k}"] \\
	\mathbb{H} \times \mathbb{C}^{n-k} \arrow[r, "\tilde{g_i}"] & \mathbb{H} \times \mathbb{C}^{n-k}.
\end{tikzcd}
\]

Let $\Gamma_N$ the subgroup of ${\rm Aut}\bigl(\mathbb{H} \times \mathbb{C}^{n-k}\bigr)$ generated by $\tilde g_0, \dots, \tilde g_{2(n-k)+1}$.
Since the matrix $N$ satisfies the conditions in the construction in Section \ref{EP_def_sec}, the action of $\Gamma_N$ generates an Endo--Pajitnov manifold $T_N$ of dimension $n-k+1$, $T_N=\big(\mathbb{H} \times \mathbb{C}^{n-k}\big)/ {\Gamma_N}$.

It is known that $T_M$ has a solvmanifold structure, $T_M \simeq G/{\Gamma}$ \cite[Theorem~3.1]{cos}. Let us denote by $\mathfrak{g}$ the Lie algebra of $G$.
Our goal is to show that $T_M$ is the total space of a holomorphic fiber bundle with base $T_N$. To achieve this, we use the fact that both $T_M$ and $T_N$ are solvmanifolds and we analyze the structure of $\mathfrak{g}$. The idea is to write $\mathfrak{g}$ as a semidirect product between an abelian ideal that will correspond to the fiber, and a complementary subalgebra corresponding to the base $T_N$.

This structure naturally extends to the corresponding Lie group level.
We also describe the lattice $\Gamma$ as a semidirect product between a lattice generating $T_N$ and a lattice consisting only of translations. The compatibility between the semidirect product structures of $G$ and $\Gamma$ allows us to define a holomorphic submersion $\pi$ between $T_M$ and $T_N$, and the fiber will be a complex torus.\looseness=-1

Consider the subspace
\[
\mathfrak{h} = \bigl\langle Y_{2n-k + 1} + \mathrm{i} Y_{n-k+1}, \dots , Y_{2n} + \mathrm{i} Y_n, \overline{Y_{2n-k + 1} + \mathrm{i} Y_{n-k+1}}, \dots , \overline{Y_{2n} + \mathrm{i} Y_n} \bigr\rangle_\C.
 \]
A direct computation shows that $\mathfrak{h}$ is an abelian ideal of $\mathfrak{g}$. Indeed, from the structure equations of $\mathfrak{g}$, it is sufficient to prove that $\bigl[A,Y_{2n-k + j} + \mathrm{i} Y_{n-k+j}\bigr] \in \mathfrak{h}$, for any $ 1 \leq j \leq k$.

Let us prove the case $j=1$. We have{\samepage
\begin{align*}
[A, Y_{2n-k+1} + \mathrm{i} Y_{n-k+1}]
 &= -\sum_{i \leq n-k} \Im \Delta_{i, n-k+1} \cdot Y_i
 + \sum_{i \leq n-k} \Re \Delta_{i, n-k+1} \cdot Y_{n+i} \\
 &\quad + \mathrm{i} \biggl( \sum_{i \leq n-k} \Re \Delta_{i, n-k+1} \cdot Y_i
 + \sum_{i \leq n-k} \Im \Delta_{i, n-k+1} \cdot Y_{n+i} \biggr),
\end{align*}
where $\Delta = \log R^{\mathsf{T}}$ (see~\cite[Theorem~3.1]{cos}).}

Since $R$ is a block diagonal matrix, $\Delta$ inherits this structure and, thus $\Delta_{i, n-k+1}=0$, for all $i\leq n-k$.
Therefore,
\[
[A, Y_{2n-k+1} + \mathrm{i} Y_{n-k+1}]=\Delta_{n-k+1, n-k+1}(Y_{2n-k+1}+\mathrm{i}Y_{n-k+1})\in \mathfrak{h}.
\]
Analogously, we can prove the same thing for any $j$, hence, $\mathfrak{h}$ is ideal of $\mathfrak{g}$ and from the structure equations it is clear that $\mathfrak{h}$ is abelian.

Consider $\mathfrak{l}:=\mathfrak{g}/\mathfrak{h}$. Then $\mathfrak{l}$ is the Lie algebra corresponding to the Endo--Pajitnov manifold~$T_N$.
Moreover,
\[
 \mathfrak{l} = \bigl\langle A+\mathrm{i}X, Y_{n+ 1} + \mathrm{i} Y_{1}, \dots , Y_{2n} + \mathrm{i} Y_n, \overline{A+\mathrm{i}X}, \dots , \overline{ Y_{2n} + \mathrm{i} Y_n} \bigr\rangle_\C
 \]
 is a subalgebra of $\mathfrak{g}$, and $\mathfrak{l}$ acts on $\mathfrak{h}$ via the adjoint representation, giving $\mathfrak{g}=\mathfrak{l} \rtimes_\varphi \mathfrak{h}$.

Let $H:=\C^k$ be the simply-connected complex Lie group corresponding to $\mathfrak{h}$ and $L:=\H \times \C^{n-k}$ the simply-connected complex Lie group corresponding to $\mathfrak{l}$. As differentiable manifolds, $G= L \times H$. Since $H$ is simply-connected, we can identify $\Aut(\mathfrak{h})=\Aut(H)$ and by~\cite[Theorem~3]{bry}, the following diagram commutes:
\[
\begin{tikzcd}
	\mathfrak{l} \arrow[r, "\varphi"] \arrow[d, "{\rm exp}"'] & {\rm Der}(\mathfrak{h}) \arrow[d, "{\rm exp}"] \\
	L \arrow[r, "\tilde{\varphi}"] & \Aut(H),
\end{tikzcd}
\] where $\tilde{\varphi}$ is given by conjugation.
Thus, $L$ acts on $H$ and by Lie's third theorem we have that $G=L \rtimes_{\tilde{\varphi}} H$ with the composition group law
\[
(l_1, h_1) \cdot(l_2, h_2)=(l_1l_2, \tilde{\varphi}(l_1)(h_2)).
\]

Let $\Gamma_P:=\bigl\langle g_{2(n-k)+2}, \dots, g_{2n+1}\bigr\rangle$. Then $\Gamma_P$ is a lattice of $\C^k$.
We will prove that $\Gamma=\Gamma_N \rtimes \Gamma_P$.
We define an action $\rho\colon \Gamma_N \longrightarrow \Aut(\Gamma_P)$ by $\rho(g)(h)=ghg^{-1}$, for any $g \in \Gamma_N$ and $h \in \Gamma_P$. This map is well defined. To verify this, it suffices to show that for every $g_i$ with $0 \leq i \leq 2(n-k)+1$ and every $g_j$ with $2(n-k)+2 \leq j \leq 2n+1$, the conjugate $g_ig_jg_i^{-1} \in \Gamma_P$.

For $i>0$, it is trivial.
For $i=0$, using~\cite[Lemma 2.3]{pajitnov1}, we have
\[
g_0g_jg_0^{-1}=g_1^{m_{j,1}} \cdots g_{2n+1}^{m_{j, 2n+1}}.
\]

Given the block form of $M$, it follows that $m_{j,l}=0$, for any $l \leq 2(n-k)+1$, and thus
\[
g_0g_jg_0^{-1}=g_{2(n-k)+2}^{m_{j, 2(n-k)+2}} \cdots g_{2n+1}m^{j, 2n+1} \qquad \text{for all}\ 2(n-k)+2 \leq j \leq 2n+1.
\]
Hence, we obtain the semidirect product structure, $\Gamma=\Gamma_N \rtimes \Gamma_P$.

Let us consider $p\colon G \longrightarrow L$ the projection $p(h,l)=l$.
Since $p(\Gamma)=\Gamma_N$, this descends to a well-defined map on the quotients:
\begin{align*}
\pi\colon \ G/\Gamma \simeq T_M \longrightarrow L/{\Gamma_N} \simeq T_N,\qquad
\pi(g\Gamma)=p(g)\Gamma_N.
\end{align*}
To see that $\pi$ is well-defined, suppose $g\Gamma=g'\Gamma$, which implies $g^{-1}g' \in \Gamma$. For $g=(l,h)$, $g'=(l',h')$, we have
\begin{equation*}
 g^{-1}g'=\big(l^{-1},l^{-1}h^{-1}l\big) \cdot (l',h')=\big(l^{-1}l', l^{-1}h^{-1}l \cdot \tilde{\varphi}\big(l^{-1}\big)(h')\big) \in \Gamma.
\end{equation*}
Hence, $l^{-1}l' \in \Gamma_N$. So $p(g)^{-1}p(g') \in \Gamma_N$, and therefore $p(g)\Gamma_N=p(g')\Gamma_N$.

Since $\pi$ is induced by the projection $p$ on the first coordinates, it is clearly a holomorphic submersion. Also, it is a proper map. By Ehresmann theorem~\cite[Corollary 6.2.3]{huy}, it follows that $\pi\colon T_M \longrightarrow T_N$ is a locally trivial fibration. Since holomorphic local trivializations exist,~$\pi$~defines a holomorphic fiber bundle.

We will prove that the fibers of $\pi$, which are complex submanifolds of $T_M$, are complex tori, by constructing an explicit isomorphism between each fiber and a complex torus.
Fix $l\Gamma_N \in L/ \Gamma_N$. Then
\[\pi^{-1}(l\Gamma_N)=\{g\Gamma \in G/ \Gamma \mid p(g)\Gamma_N=l \Gamma_N\}.\]
Define a map
\[\psi_l\colon \ H \longrightarrow \pi^{-1}(l \Gamma_N), \psi_l(h)=(l,h)\Gamma.\]
Since $\Gamma_P$ is generated only by translations, it is a normal subgroup in $\Gamma$.
We obtain an induced map
\begin{gather*}
\bar{\psi_l}\colon\ H/\Gamma_P \simeq \mathbb T^{k} \longrightarrow \pi^{-1}(l \Gamma_N),\qquad
\bar{\psi_l}(h \Gamma_P)=(l,h)\Gamma.
\end{gather*}
It is clear that $\bar{\psi_l}$ is a biholomorphism.
Thus, the fiber of $\pi$ is a complex torus.

In conclusion, we can state the following.

\begin{Theorem}\label{ex_th}
Let $X$ be an Endo--Pajitnov manifold associated to a block diagonal matrix such that one of the blocks produces a $($smaller dimensional$)$ Endo--Pajitnov manifold $Y$. Then $X$ admits the structure of a holomorphic fiber bundle over $Y$. In particular, $X$ contains complex tori, as complex submanifolds.
\end{Theorem}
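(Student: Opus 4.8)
The plan is to exploit the solvmanifold presentations $T_M \simeq G/\Gamma$ and $T_N \simeq L/\Gamma_N$ and to realize both the Lie group $G$ and the lattice $\Gamma$ as \emph{compatible} semidirect products, so that projecting onto the ``base'' factor produces the sought fibration. First I would isolate inside $\mathfrak{g}$ the subspace $\mathfrak{h}$ spanned by the complex directions coming from the block $P$, and check that it is an abelian ideal. The decisive point is that the structure constants of $\mathfrak{g}$ are controlled by the matrix $\Delta = \log R^{\mathsf{T}}$, which inherits the block-diagonal shape of $R$; consequently the off-diagonal entries $\Delta_{i,\,n-k+j}$ with $i \le n-k$ vanish, so $[A, Y_{2n-k+j}+\mathrm{i}Y_{n-k+j}]$ stays inside $\mathfrak{h}$, while all brackets among the generators of $\mathfrak{h}$ are zero. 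Setting $\mathfrak{l} = \mathfrak{g}/\mathfrak{h}$, this yields $\mathfrak{g} = \mathfrak{l} \rtimes_\varphi \mathfrak{h}$ with $\mathfrak{l}$ the Lie algebra of $T_N$.

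Next I would pass to the group level. Since $H \simeq \C^k$ is simply connected and $L \simeq \H \times \C^{n-k}$ is the simply connected group of $\mathfrak{l}$, the correspondence ${\rm Der}(\mathfrak{h}) \to \Aut(H)$ together with Lie's third theorem (via \cite[Theorem~3]{bry}) upgrades $\varphi$ to $\tilde{\varphi}\colon L \to \Aut(H)$ and gives $G = L \rtimes_{\tilde{\varphi}} H$ as a complex Lie group. In parallel I would show that the lattice splits as $\Gamma = \Gamma_N \rtimes \Gamma_P$, where $\Gamma_P = \langle g_{2(n-k)+2}, \dots, g_{2n+1}\rangle$ is a lattice of translations in $H$. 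Here the block form of $M$ enters again: conjugation by $g_i$ with $i>0$ is trivial, while for $g_0$ the relation from \cite[Lemma~2.3]{pajitnov1} expresses $g_0 g_j g_0^{-1}$ as a product of the $g_l$ with integer exponents $m_{j,l}$, and the vanishing of $m_{j,l}$ for $l \le 2(n-k)+1$ forces $g_0 g_j g_0^{-1} \in \Gamma_P$. Thus $\Gamma_N$ normalizes $\Gamma_P$ and this splitting is compatible with the one on $G$.

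With both decompositions available, I would define $\pi\colon T_M \to T_N$ as the map induced by the projection $p\colon G \to L$ onto the base factor; well-definedness follows from $p(\Gamma) = \Gamma_N$, which I would verify by writing an arbitrary $g^{-1}g' \in \Gamma$ in semidirect-product coordinates and reading off its $L$-component. Since $p$ is a holomorphic submersion and $\pi$ is proper (source and target being compact), Ehresmann's theorem \cite[Corollary~6.2.3]{huy} makes $\pi$ a locally trivial fibration, and holomorphy of the local trivializations promotes it to a holomorphic fiber bundle. Finally, fixing $l\Gamma_N$, the map $h \mapsto (l,h)\Gamma$ descends through the normal subgroup $\Gamma_P$ to a biholomorphism $H/\Gamma_P \simeq \mathbb{T}^k \to \pi^{-1}(l\Gamma_N)$, identifying each fiber with a complex torus.

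I expect the main obstacle to be the simultaneous verification that the semidirect-product structures on $G$ and on $\Gamma$ are compatible — that is, that the abstract Lie-theoretic splitting of $\mathfrak{g}$ actually descends to the arithmetic lattice. The two halves invoke the block-diagonal hypothesis in different guises (through $\Delta$ at the Lie-algebra level and through the integer exponents $m_{j,l}$ at the lattice level), and reconciling them is the crux; once the splittings match, the construction of $\pi$ and the identification of the fibers are essentially formal.
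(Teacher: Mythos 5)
Your proposal reproduces the paper's own proof essentially step for step: the same abelian ideal $\mathfrak{h}$ detected via the block structure of $\Delta = \log R^{\mathsf{T}}$, the same semidirect-product splittings $\mathfrak{g} = \mathfrak{l} \rtimes_\varphi \mathfrak{h}$, $G = L \rtimes_{\tilde{\varphi}} H$ and $\Gamma = \Gamma_N \rtimes \Gamma_P$ (with the identical appeals to \cite[Theorem~3]{bry} and \cite[Lemma~2.3]{pajitnov1}), the same projection-induced map $\pi$ made into a holomorphic fiber bundle by Ehresmann's theorem, and the same identification of the fibers with $H/\Gamma_P \simeq \mathbb{T}^k$. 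The argument is correct and coincides with the paper's proof in both structure and detail.
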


In the following, we provide a numerical example in the lowest possible dimension.

\begin{Example}\label{ex_curves}
We give an example\footnote{We are much grateful to Alexandru Gica for offering us this example.} of an Endo--Pajitnov manifold that contains complex curves.
Let $n=2$, $k=1$, and a diagonalizable matrix $M$
\[
M = \begin{pmatrix}
	N & 0 \\
	0 & P
\end{pmatrix},
\qquad
\text{where}\qquad
N= \begin{pmatrix}
\hphantom{-} 1 & 2 & -1\\
-1 & 0 & -2 \\
\hphantom{-} 0 & 1 & -1
\end{pmatrix}, \qquad
P=\begin{pmatrix}
0 & -1 \\
1 & \hphantom{-} 0
\end{pmatrix}.
\]
It is easy to see that $M \in \textrm{SL}(5, \mathbb{Z})$ and satisfies the special conditions from construction of Endo--Pajitnov manifold.
Thus, we obtain $T_M$, an Endo--Pajitnov manifold of complex dimension $3$.

On the other hand, $N\in \textrm{SL}(3, \mathbb{Z})$ has a single real eigenvalue, $\alpha$, and two complex conjugate eigenvalues $\beta_1$, $\overline{\beta}_1$, and hence it defines an Inoue surface of type $S^N$, call it $T_N=\mathbb{H} \times \mathbb{C}/G_N$.

As in the general case, we define the projection $\pi\colon T_M \longrightarrow T_N$
\[
\pi ([w,(z_1, z_2)])=[[w,z_1]], \qquad w \in \mathbb{H},\quad z_1, z_2 \in \mathbb{C}.
\]
Since $\pi$ is a holomorphic submersion, Theorem~\ref{ex_th} assures that $T_M$ projects over an Inoue surface, with complex curves as fibres.
\end{Example}

\section{Curves on Endo--Pajitnov manifolds}

In this section, we derive a necessary condition that the matrix $M$ must satisfy so that the manifold $T_M$ does not contain complex curves. The condition we find is algebraic, expressed in~terms of the components of the eigenvector $a$ associated with the real eigenvalue $\alpha$ of the matrix~$M$. The proof is similar to the one in~\cite{v_1}, where it was shown that no OT manifold can contain complex curves.

\begin{Theorem}\label{th_main}
Let $T_M$ be an Endo--Pajitnov manifold. If the components of the eigenvector~$a$~associated to the real eigenvalue $\alpha$ of the matrix $M$ are linearly independent over $\mathbb{Z}$, then there are no compact complex curves on $T_M$.
\end{Theorem}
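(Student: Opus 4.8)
The plan is to exploit the holomorphic projection to the hyperbolic factor to manufacture a closed, \emph{exact}, semi-positive $(1,1)$-form on $T_M$, to show that any compact curve is invisible to this form and hence trapped inside a $\C^n$-fibre of the projection, and finally to use the arithmetic hypothesis to forbid such a curve. Concretely, I would start from the function $\varphi=\log\Im w$ on the universal cover $\H\times\C^n$. Since $g_0^{*}\varphi=\varphi+\log\alpha$ and $g_i^{*}\varphi=\varphi$ (the translation amounts $a^i$ being real, they do not change $\Im w$), every $g\in G_M$ satisfies $g^{*}\varphi=\varphi+c_g$ for a constant $c_g$. Consequently the form $\omega:=-\ri\,\del\delb\varphi$ is $G_M$-invariant and descends to $T_M$, where it is exact, $\omega=d\eta$ with $\eta=-d^{c}\varphi$ a globally defined smooth $1$-form. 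On $\H$ one computes that $-\ri\,\del\delb\log\Im w$ is a positive multiple of the hyperbolic area form; pulled back along $(w,z)\mapsto w$ it is therefore semi-positive on $\H\times\C^n$, degenerating exactly along the $\C^n$-directions.

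Next I would run the positivity-plus-Stokes step. Given a compact complex curve $C\subset T_M$, pass to the normalization $\nu\colon\widehat C\to C$, a smooth compact Riemann surface. Since $\eta$ is a global $1$-form, $\nu^{*}\eta$ is smooth on $\widehat C$, so Stokes gives $\int_{\widehat C}\nu^{*}\omega=\int_{\widehat C}d(\nu^{*}\eta)=0$. As $\omega$ is semi-positive, $\nu^{*}\omega$ is a non-negative area form on $\widehat C$ with zero integral, hence vanishes identically. Because the hyperbolic form is strictly positive on $\H$, this forces the holomorphic coordinate $w$ to be constant along $C$. Equivalently, every connected component $\widetilde C_0$ of the preimage $\widetilde C\subset\H\times\C^n$ lies in a single fibre $\{w_0\}\times\C^n\cong\C^n$.

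Finally I would bring in the hypothesis. Since $G_M$ acts freely and properly discontinuously, $\widetilde C\to C$ is a covering map on which $G_M$ acts, and when $C$ is connected $G_M$ permutes the components of $\widetilde C$ transitively; thus a fixed component $\widetilde C_0$ covers $C$ regularly with deck group $S_0=\{g\in G_M : g\widetilde C_0=\widetilde C_0\}$, giving $C\cong\widetilde C_0/S_0$. Any $g\in S_0$ preserves the fibre $\{w_0\}$, hence fixes $\Im w_0$; writing $g=g_0^{m}\circ(\text{translation by }\sum_i n_i u_i)$ and comparing imaginary parts yields $\alpha^{m}\Im w_0=\Im w_0$, so $m=0$. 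Then $g$ acts on the first coordinate by $w\mapsto w+\sum_i n_i a^i$, and fixing $w_0$ forces $\sum_i n_i a^i=0$ with $n_i\in\Z$. By the $\Z$-linear independence of the components $a^i$, all $n_i$ vanish, i.e.\ $S_0=\{e\}$. Hence $C\cong\widetilde C_0$ is a compact complex curve realized as a closed analytic subset of $\C^n$, which is impossible by the maximum principle, since the restrictions of the coordinate functions would be non-constant holomorphic functions on a compact curve. This contradiction shows no such $C$ exists.

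The conceptual heart is the stabilizer computation of the third paragraph, which is the single place where the arithmetic hypothesis enters, and it explains why the block-diagonal Example~\ref{ex_curves} (where some $a^i$ vanish, so the $a^i$ are $\Z$-dependent) \emph{does} carry torus fibres. I expect the main technical care to lie not there but in two routine-looking points: justifying the vanishing and the integration formula on a possibly singular or reducible curve, which is handled cleanly by passing to the normalization and treating irreducible components separately; and the covering-space bookkeeping identifying $C$ with $\widetilde C_0/S_0$, which requires stating precisely why $G_M$ acts transitively on the components of $\widetilde C$ when $C$ is connected.
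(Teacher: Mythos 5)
Your proposal is correct and follows essentially the same route as the paper: the same exact semipositive $(1,1)$-form $-\ei\,\del\delb\log\Im w$ pulled back from the Poincar\'e metric, the same Stokes argument forcing any compact curve into a fibre $\{w_0\}\times\C^n$, and the same imaginary-part computation ruling out $g_0$ followed by the $\Z$-independence of the $a^i$ killing the translations. Your extra care with the normalization of a possibly singular curve and the covering-space identification $C\cong\widetilde C_0/S_0$ is a slightly more explicit packaging of what the paper phrases as disjointness of $\sigma(L)$ and $L$ for the leaves of the null foliation, but the substance is identical.
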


\begin{proof}
The idea of the proof is the following. We construct an exact, semipositive (1,1)-form~$\omega$ on $T_M$ whose integral over any compact complex curve of $T_M$ will be necessarily nonnegative and such that any complex curve in $T_M$ should stay in a leaf of the null foliation of $\omega$. We then show that the stated condition implies that these leaves are isomorphic to $\mathbb{C}^n$, which contains no compact curves.

Here are the details.

{\it Step $1$. Construction of the form $\omega$.} We start by constructing a semipositive $(1,1)$-form $\tilde \omega$ on the universal cover $\tilde T_M:=\mathbb{H} \times \mathbb{C}^n$ of $T_M$, invariant by the action of the deck group $G_M$.

Let $(w, z_1, \dots, z_n)$ be the complex coordinates on $\tilde T_M$ and define $\varphi\colon \tilde T_M \longrightarrow \mathbb{R}$, by
\begin{equation*}
\varphi(w,z_1, \dots, z_n)=\frac{1}{\Im w}, \qquad w \in \mathbb{H}, \quad z_1, \dots, z_n \in \mathbb{C}.
\end{equation*}
It is clear that $\varphi(w,z_1, \dots, z_n)>0$ on $\tilde T_M $.

Define $\tilde{\omega}:=\mathrm{i} \6\Bar{\6} \log(\varphi)$.
In the above coordinates on $\tilde{T_M}$, $\tilde \omega$ is expressed as
\[\tilde{\omega}=\mathrm{i} \frac{1}{4(\Im w)^2} {\rm d}w \wedge {\rm d}\Bar{w}.\]
Note that using the ${\rm d}$ and ${\rm d}^c$ operators, we can rewrite
\[\tilde{\omega}=\frac{1}{2}{\rm d} {\rm d}^c \log \varphi,\]
and hence $\tilde{\omega}$ is an exact form on $\tilde T_M$.

Let us consider $\omega_\H$, the Poincar\'e metric on $\H$ and ${\rm pr}_1\colon \H \times \C^n \longrightarrow \H$ the projection onto the first factor. Then, we have
\begin{equation*}
 \tilde \omega={\rm pr}_1^*(\omega_\H).
\end{equation*}
Since ${\rm pr}_1$ is a holomorphic submersion, it follows that $\tilde \omega$ is semipositive definite.
Moreover, since $\omega_\H$ is invariant under translations and multiplications by real numbers, we obtain that $\tilde \omega$ is invariant under the action of $G_M$.
 Since $\tilde{\omega}$ is $G_M$-invariant, it is the pullback of an $(1,1)$-form~$\omega$ on~$T_M:= \tilde T_M/G_M$. Clearly, $\omega$~is an exact, semipositive $(1,1)$-form on~$T_M$.

{\it Step $2$. The action of the deck group on the leaves of the null foliation of $\tilde\omega$.}
 If $V=Z+A$, where $Z \in T\mathbb{H}$, $A \in T\mathbb{C}^n$ and $Z=X+\mathrm{i}Y$, then
\begin{equation} \label{nucleu}
\tilde{\omega}(V, JV) = \frac{\mathrm{i}}{4 (\Im w)^2} \cdot (-2\mathrm{i}) \, \mathrm{d}w(Z) \, \mathrm{d}\bar{w}(Z) = \frac{2}{4 (\Im w)^2} \big(|X|^2 + |Y|^2\big).
\end{equation}
 From \eqref{nucleu}, we obtain that any (maximal) leaf of the zero foliation of $\tilde{\omega}$ on $\tilde T_M$ is isomorphic to~$\mathbb{C}^n$.

Let $L=\{w\} \times \mathbb{C}^n$, for some fixed $w$, be such a leaf. We look at the image of the action of~$G_M$ on~$L$ and we determine its intersection with $L$.
By the description of $L$, for any $\sigma \in G_M$ such that $L \cap \sigma(L) \neq \varnothing$, the first coordinate of the points in $L$ coincide with the first coordinate of the points in $\sigma(L)$.
In general, $\sigma$ contains all generators of $G_M$. Its most general form is $\sigma=g_0^{s_0} \circ g_1^{s_1}\circ \dots \circ g_{2n+1}^{s_{2n+1}}$, where $s_i \in \mathbb{Z}$.

We show that $g_0$ cannot appear.
Indeed, the above mentioned coincidence of the first coordinates translates in the following equation:
\begin{equation*}
\alpha^{s_0}w+\sum_{i=1}^{2n+1} s_i a^i=w,
\end{equation*}
which is equivalent to
\begin{equation*}
(\alpha^{s_0}-1)w=-\sum_{i=1} ^{2n+1} s_i a^i.
\end{equation*}
Taking imaginary parts in the equation and using the fact that $w \in \H$, we necessarily obtain that $s_0=0$.
We conclude that $\sigma$ cannot contain the generator $g_0$. It is then obtained only from translations, $\sigma=g_1^{s_1}\circ \cdots \circ g_{2n+1}^{s_{2n+1}}$, and we have
\begin{equation*}
s_1 a^1+ \dots +s_{2n+1} a^{2n+1}=0,
\end{equation*}
a linear dependence relation over $\mathbb{Z}$ which contradicts the hypothesis.
Thus, we showed that $L \cap \sigma(L) = \varnothing$, for all $\sigma \in G_M$.

{\it Step $3$. The zero foliation of $\omega$ on $T_M$.}
Recall that $\omega$ is semipositive on $T_M$ (Step 1) and hence its integral on any compact complex curve $\gamma \subset T_M$ is nonnegative. By Stokes theorem, since $\omega$ is exact, this integral vanishes. Thus, $\omega$ vanishes on all closed complex curves in $T_M$. Equivalently, any compact complex curve in $T_M$ stays in a leaf of the zero foliation of $\omega$.

On the other hand, since $\tilde{\omega}$ is $G_M$-invariant, each leaf of the zero foliation of $\omega$ on $T_M$ is isomorphic to a component of the leaf of the zero foliation of $\Tilde{\omega}$ on $\tilde T_M$. Therefore, it is isomorphic with $\mathbb{C}^n$, which does not contain any compact complex submanifold.
\end{proof}

\begin{Remark}
	Clearly, Example \ref{ex_curves} does not satisfy the condition in Theorem~\ref{th_main}. At the moment, we cannot prove that the condition is also sufficient. However, we dare to propose the following:
\end{Remark}

\begin{Conjecture}
	Let $T_M$ be an Endo--Pajitnov manifold, with real eigenvalue $\alpha$. Then $T_M$ admits complex curves if and only if the components of the eigenvector $a$ are not linearly independent over $\mathbb{Z}$.
\end{Conjecture}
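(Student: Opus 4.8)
The statement is a biconditional, and one of its two implications is already established, so I would dispose of that first and then concentrate on the substantial direction.

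\emph{The easy implication.} If $T_M$ carries a compact complex curve, then Theorem~\ref{th_main}, read in contrapositive form, forbids the components of $a$ from being linearly independent over $\Z$. Thus ``$T_M$ admits a curve'' $\Rightarrow$ ``the $a^i$ satisfy a nontrivial $\Z$-relation'' is immediate, and all the content lies in the converse. So suppose there is a nonzero $s\in\Z^{2n+1}$ with $\sum_i s_i a^i=0$, and set $K=\{s\in\Z^{2n+1}\mid s^{\mathsf T}a=0\}$, the full relation lattice. The first point is that $K$ is $M^{\mathsf T}$-invariant: if $s^{\mathsf T}a=0$ then $(M^{\mathsf T}s)^{\mathsf T}a=s^{\mathsf T}Ma=\alpha\,s^{\mathsf T}a=0$. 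Moreover $\alpha\notin\Q$, for a rational root of the monic characteristic polynomial of $M$ would be an integer dividing its constant term $p(0)=\det(-M)=-1$, hence $\pm1$, which is excluded by $\alpha>0$, $\alpha\neq1$. Since in addition every $\beta_j$ is non-real, $M$ has no rational eigenvalue, so $K$ contains no rank-one $M^{\mathsf T}$-invariant sublattice; consequently $r:=\rk K$ is even and at least $2$.

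\emph{From the relation to a subtorus.} Next I would revisit Step~2 of Theorem~\ref{th_main}. Fix a leaf $L=\{w\}\times\C^n$ of the null foliation of $\tilde\omega$. Its stabiliser in $G_M$ consists precisely of the translations attached to $s\in K$, acting on the fibre by $z\mapsto z+v_s$ with $v_s=\bigl(s^{\mathsf T}b_1,\dots,s^{\mathsf T}b_n\bigr)\in\C^n$; as $G_M$ acts properly discontinuously these $v_s$ form a discrete subgroup $\Lambda\subset\C^n$. The key computation is that the $\R$-linear map $\Phi\colon K\otimes\R\to\C^n$, $\Phi(s)=v_s$, intertwines $M^{\mathsf T}$ with the operator $R^{\mathsf T}$ of $g_0$: since $Mb_j=\sum_l R_{lj}b_l$ one gets $s^{\mathsf T}Mb_j=(R^{\mathsf T}v_s)_j$, i.e.\ $\Phi(M^{\mathsf T}s)=R^{\mathsf T}\Phi(s)$. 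Complexifying and comparing eigenspaces, $\Phi$ carries the $\beta_j$-eigendirections of $M^{\mathsf T}$ to the holomorphic $\beta_j$-eigendirections of the $\C$-linear operator $R^{\mathsf T}$ on $\C^n$; because every $\beta_j$ has $\Im\beta_j>0$, this forces $\Phi$ to be $\C$-linear. Since $V=\Var{V}$ hmm---more precisely, $\Phi|_K$ is injective ($\Phi(s)=0$ together with $s\in K$ gives $s\perp a,W,\overline W$, hence $s=0$ as $\dim_\C V=1$), so $V:=\Phi(K\otimes\R)$ is a complex subspace of $\C^n$ of complex dimension $r/2\ge1$, and $\Lambda$ is a \emph{full} lattice in $V$. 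Passing to $T_M$, the leaf $\C^n/\Lambda$ therefore contains the compact complex torus $V/\Lambda$, a complex submanifold of $T_M$ of dimension $r/2$.

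\emph{The main obstacle.} When $r=2$ the torus $V/\Lambda$ is one-dimensional, i.e.\ an elliptic curve, and the converse is proved; this is exactly the mechanism behind Example~\ref{ex_curves}, where the two vanishing coordinates of $a$ produce a rank-two relation lattice. The genuine difficulty is the case $r>2$: there the construction delivers only a torus of dimension $r/2\ge2$, and a complex torus of dimension $\ge2$ need not contain any compact curve. One may try to replace $K$ by a rank-two $M^{\mathsf T}$-invariant sublattice obtained from the decomposition of $K\otimes\Q$ into $M^{\mathsf T}$-irreducible factors: a factor on which $M^{\mathsf T}$ has a \emph{quadratic} eigenvalue $\beta_j$ has rank $2$ and yields an elliptic curve as above. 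The gap, and the reason the assertion remains conjectural, is that all $\beta_j$ entering $K$ may have degree $>2$ over $\Q$, in which case every minimal invariant piece has rank $>2$ and the resulting torus $V/\Lambda$---which carries the nontrivial endomorphism $R^{\mathsf T}$ with eigenvalues $\beta_j$, hence is of CM type---may contain no complex curve at all. Extracting an honest one-dimensional curve in that situation, rather than merely a higher-dimensional subtorus, is the crux that I do not see how to force in general.
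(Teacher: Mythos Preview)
The statement you are addressing is presented in the paper as a \emph{Conjecture}, not a theorem: the surrounding Remark says explicitly ``At the moment, we cannot prove that the condition is also sufficient.'' There is therefore no proof in the paper to compare your attempt against. What the paper offers is only the forward implication (Theorem~\ref{th_main}) and a single piece of evidence for the converse (Example~\ref{ex_curves}, which is exactly the rank-two situation you isolate).

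Your write-up is thus not a reproduction of something in the paper but a genuine attempt at the open direction, and as such it should be read as partial progress rather than a proof. The easy implication is correctly dispatched by contraposition of Theorem~\ref{th_main}. For the converse, your construction is sound and in fact goes further than anything stated in the paper: the relation lattice $K=\{s\in\Z^{2n+1}\mid s^{\mathsf T}a=0\}$ is $M^{\mathsf T}$-invariant, the eigenvalues of $M^{\mathsf T}$ on $K\otimes\C$ are drawn from $\{\beta_j,\bar\beta_j\}$ (so $r=\rk K$ is even), and the image $V=\Phi(K\otimes\R)\subset\C^n$ is a complex subspace carrying the full lattice $\Lambda=\Phi(K)$. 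The resulting $V/\Lambda$ really does embed holomorphically in $T_M$ as a compact complex torus of dimension $r/2$. A couple of expository points deserve tightening: the claim that $V$ is a complex subspace is correct but the phrase ``this forces $\Phi$ to be $\C$-linear'' is imprecise (what you actually show is that $\Phi_\C(K^{1,0})$ lands in the holomorphic summand of $\C^n\otimes_\R\C$, whence $V$ is $J$-invariant); and the stray fragment ``$V=\Var{V}$ hmm'' should be removed.

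You then identify exactly the obstruction that keeps this a conjecture: when $r=2$ the torus is an elliptic curve and you are done, but when every $\Q$-irreducible $M^{\mathsf T}$-factor of $K\otimes\Q$ has rank $>2$ the torus $V/\Lambda$ has dimension $\ge 2$ and need not contain any curve. Your remark that $V/\Lambda$ carries the extra endomorphism $R^{\mathsf T}|_V$ is pertinent, but ``CM type'' does not by itself force the existence of curves on an abelian variety of dimension $\ge 2$, so the gap is real. In short: your proposal does not settle the conjecture, and you are candid about why; this matches the paper's own assessment that the sufficiency remains open.
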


\begin{Remark}
 In~\cite[Example 5.8]{cos}, we constructed an example of a $4$-dimensional Endo--Pajitnov manifold admitting both pluriclosed and astheno-K\"ahler metrics. Moreover, the condition in Theorem~\ref{th_main} is satisfied, so the manifold contains no compact complex curves.
\end{Remark}

In a manner similar to~\cite{v_2}, we obtain a result concerning the existence of complex surfaces in Endo--Pajitnov manifolds.

\begin{Proposition} \label{sup}
 Let $T_M$ be an Endo--Pajitnov manifold without compact complex curves. Then~$T_M$ does not contain any closed complex surfaces except Inoue surfaces.
\end{Proposition}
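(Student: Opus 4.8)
The plan is to combine the classification of curve-free compact complex surfaces with the geometry of the exact semipositive form $\omega$ and its null foliation introduced in the proof of Theorem~\ref{th_main}. The first observation is that if $S\subset T_M$ is a compact complex surface, then $S$ itself contains no compact complex curves, since any such curve would be a curve in $T_M$, contrary to hypothesis. By the Enriques--Kodaira classification of compact complex surfaces containing no curves -- the same structural input used in~\cite{v_2} -- such an $S$ must be a complex torus, a K3 surface, or an Inoue surface. The entire argument then reduces to excluding the first two possibilities, so that only Inoue surfaces remain. Equivalently, after classification one is left with exactly three types of universal cover of $S$: $S$ itself (K3, simply connected), $\C^2$ (torus), and $\H\times\C$ (Inoue), and I treat these in turn.

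For the K3 case, and more generally for any simply connected $S$, I would exploit that the universal cover $\H\times\C^n$ of $T_M$ is a Stein manifold, being the product of a disk with $\C^n$. If $S$ is simply connected, the inclusion $S\hookrightarrow T_M$ lifts to a holomorphic map $S\to\H\times\C^n$; since $S$ is compact and a Stein manifold contains no positive-dimensional compact analytic subset, this lift is constant, forcing $S$ to be a point, a contradiction. This rules out K3 surfaces at once.

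For the torus case I would first confine $S$ to a single leaf of the null foliation. Lifting the universal cover $\C^2\to S\hookrightarrow T_M$ to $\H\times\C^n$ and composing with $\mathrm{pr}_1$ yields a holomorphic map $\C^2\to\H$; as $\H$ is biholomorphic to a bounded domain, Liouville's theorem forces it to be constant, so $S$ lies in one leaf $L$, the image of some $\{w_0\}\times\C^n$. What remains -- and this is the principal difficulty -- is to exclude a compact complex torus sitting inside $L$. The statement I would aim to prove is that, under the no-curves hypothesis, every such leaf is biholomorphic to $\C^n$ (hence Stein), paralleling Steps~2--3 in the proof of Theorem~\ref{th_main}: a nontrivial element of $G_M$ preserving $L$ must be a pure translation arising from a $\Z$-relation among the components of $a$, and the induced identifications on $\C^n$ would have to be shown incompatible with the existence of a compact torus in $T_M$ carrying no compact curve. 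Granting $L\simeq\C^n$, the leaf is Stein and contains no compact surface, which excludes the torus and completes the classification down to Inoue surfaces.

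The main obstacle is exactly this last step: converting the weak hypothesis ``$T_M$ has no compact complex curves'' into the triviality of the stabilizer of a surface-carrying leaf. In Theorem~\ref{th_main} the $\Z$-linear independence of the components of $a$ forces every leaf stabilizer to vanish outright, so that all leaves are $\C^n$; here I have only the no-curves assumption, and must instead rule out a stabilizer lattice $\Lambda\subset\C^n$ that supports a compact $2$-torus yet no compact curve. This is the delicate point where the arithmetic of the eigenvector $a$ and of the monodromy $R^{\mathsf{T}}$ enters, and it is also the reason the converse direction remains subtle, as reflected in the conjecture following Theorem~\ref{th_main}. Once the torus and K3 cases are excluded, the classification leaves precisely the Inoue surfaces, which is the assertion of the proposition.
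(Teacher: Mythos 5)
There is a genuine gap, and in fact two. The first and most serious is your classification input. You assert that, by Enriques--Kodaira, a compact complex surface with no compact curves must be a complex torus, a K3 surface, or an Inoue surface, and you present this as ``the same structural input used in~\cite{v_2}''. That statement is not a theorem: it requires knowing that every minimal class VII surface with $b_2>0$ contains a curve, which is an open problem (essentially the global spherical shell conjecture; it is settled only for small $b_2$ by Teleman's work). Neither \cite{v_2} nor the present paper uses that reduction. Instead, the paper's proof restricts the \emph{exact} semipositive $(1,1)$-form $\omega$ constructed in the proof of Theorem~\ref{th_main} to a hypothetical compact surface $S\subset T_M$, concludes that $S$ has K\"ahler rank one, and then invokes the classification of compact surfaces of K\"ahler rank one due to Chiose--Toma and Brunella \cite{ct,br}: non-K\"ahlerian elliptic fibrations, certain Hopf surfaces, Inoue surfaces, and their blow-ups. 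All of these except Inoue surfaces contain curves, so the no-curves hypothesis finishes the proof. The essential move you missed is to use $\omega$ to constrain $S$ \emph{before} classifying, which replaces an open conjecture by an actual theorem.

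The second gap you admit yourself: your route leaves the torus case unresolved, reducing it to excluding a compact $2$-torus inside a null-foliation leaf $\C^n/\Lambda$, and the no-curves hypothesis gives no direct contradiction there, since a generic $2$-torus carries no curves at all. In the paper's argument this case never arises: tori are K\"ahler, and a K\"ahler surface cannot carry a non-trivial exact semipositive $(1,1)$-form (wedge it with the K\"ahler form and integrate; Stokes gives zero while semipositivity and non-triviality give a positive integral), so tori simply do not appear in the K\"ahler rank one list. To be fair, the difficulty you isolate does touch the one point the paper treats tersely --- the asserted non-triviality of $\omega|_S$, which would fail if $S$ sat inside a null leaf --- but the paper's strategy confines the problem to that single assertion, whereas your proposal leaves an entire case (and with it the whole proposition) unproved. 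As written, the proposal is not a proof.
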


\begin{proof}
 The idea of the proof is based on a result by Brunella about classification of surfaces of K\"ahler rank one.

 It was shown in~\cite{pajitnov1} that Endo--Pajitnov manifolds are non-K\"ahler. In the previous proof, we constructed a non-trivial closed semipositive $(1,1)$-form $\omega$ on $T_M$. The restriction of $\omega$ to any complex surface in $T_M$ yields a non-trivial closed semipositive $(1,1)$-form on that surface. Therefore, any compact complex surface in $T_M$ must have K\"ahler rank one.

 Compact surfaces of K\"ahler rank $1$ have been classified in~\cite{ct} and~\cite{br}. They can be
 \begin{enumerate}\itemsep=0pt
 \item Non-K\"ahlerian elliptic fibrations;
 \item certain Hopf surfaces, and their blow-ups;
 \item Inoue surfaces, and their blow-ups.
\end{enumerate}

By definition, elliptic fibrations contain curves, as do Hopf surfaces and their blow-ups. Since~$T_M$ does not contain any compact complex curves by hypothesis, it cannot contain any surface from the above classification except for Inoue surfaces.
\end{proof}

\subsection*{Acknowledgements}
My warm thanks to Liviu Ornea for suggesting this research topic and to Miron Stanciu for many useful discussions. I am grateful to the anonymous referees for their insightful and constructive suggestions, which have significantly improved the quality of this work.

\pdfbookmark[1]{References}{ref}
\LastPageEnding

\end{document}